\theoremstyle{plain}
\newtheorem{theorem}{Theorem}[section]
\newtheorem{lemma}[theorem]{Lemma}
\newtheorem{remark}[theorem]{Remark}
\newtheorem{definition}{Definition}[section]
\begin{document}
\title{Some completely monotonic properties for the  $(p,q )$-gamma function}
\author{Valmir Krasniqi}
\address{Department of Mathematics and Computer Sciences, University of Prishtina, Republic of Kosova}
\email{vali.99@hotmail.com}
\author{Faton Merovci}
\address{Department of Mathematics, University of Prishtina, Republic of Kosova}
\email{fmerovci@yahoo.com}

\begin{abstract} 
 It is defined $\Gamma_{p,q}$ function, a generalize of $\Gamma$ function. Also, we defined $\psi_{p,q}$-analogue of the psi function as the log derivative of $\Gamma_{p,q}$. For the $\Gamma_{p,q}$ -function, are given some properties related to convexity, log-convexity and completely monotonic function. Also, some properties of $\psi_{p,q} $ analog of the $\psi$ function have been established. As an application, when $p\to \infty,  q\to 1,$ we obtain all result of \cite{Valmir1} and  \cite{SHA}.
\end{abstract}

\subjclass[2010]{
{\sl MSC 2010}: 33B15; 26A51; 26A48}
\keywords{Completely monotonic function, logarithmically completely monotonic function, $(p,q) -$Gamma function, $(p,q) -$psi function, generalization  inequality}
\maketitle

\section{Introduction}
The Euler gamma function $\Gamma(x)$ is defined for $x>0$ by
$$\Gamma(x)=\int_0^\infty t^{x-1}e^{-t}dt.$$
The digamma (or psi) function is defined for positive real numbers
$x$ as the logarithmic derivative of Euler's gamma function, that is
$\displaystyle \psi(x)=\frac{d}{dx}\ln \Gamma(x)=\frac{\Gamma'(x)}{\Gamma(x)}$. The following integral and series
representations are valid (see \cite{AS}):
\begin{equation}\label{eqpsi1}
\psi(x)=-\gamma+\int_0^\infty\frac{e^{-t}-e^{-xt}}{1-e^{-t}}dt=-\gamma-\frac{1}{x}+\sum_{n\geq1}\frac{x}{n(n+x)},
\end{equation}
where $\gamma=0.57721\cdots$ denotes Euler's constant.

Euler, gave another equivalent definition for the $\Gamma(x)$ (see \cite{AP},\cite{SA})
\begin{equation}\label{eqgammap1}
\Gamma_p(x)=\frac{p!p^x}{x(x+1)\cdot \ldots \cdot (x+p)}=\frac{p^x}{x(1+\frac{x}{1})\cdot \ldots \cdot(1+\frac{x}{p})},\ x>0,
\end{equation}
where $p$ is positive integer, and
\begin{equation}\label{eqgammap2}
\Gamma(x) =\lim_{p\rightarrow\infty}\Gamma_p(x).
\end{equation}

The $p$-analogue of the psi function as the
logarithmic derivative of the $\Gamma_p$ function (see \cite{Valmir1}), that is
\begin{equation}\label{psi_p1}
\psi_p(x)=\frac{d}{dx} \ln \Gamma_p(x)=\frac{\Gamma'_p(x)}{\Gamma_p(x)}.
\end{equation}

The following representations are valid:
\begin{equation}\label{eq6}
\Gamma_p(x)=\int_{0}^{p}\Big(1-\frac{t}{p}\Big)^pt^{x-1}dt,
\end{equation}
\begin{equation}\label{eq6-1}
\psi_p(x)=\ln p-\int_{0}^{\infty}\frac{e^{-xt}(1-e^{-(p+1)t})}{1-e^{-t}}dt
\end{equation}
and
\begin{equation}\label{eq7}
\psi_p^{(m)}(x)=(-1)^{m+1} \int_{0}^{\infty}\frac{t^m\cdot e^{-xt}}{1-e^{-t}}(1-e^{-(p+1)t})dt.
\end{equation}
Jackson (see \cite{K,K2,KR,SKS}) defined the $q$-analogue of the
gamma function as
\begin{equation}\label{eqgamma1}
\Gamma_q(x)=\frac{(q;q)_\infty}{(q^x;q)_\infty}(1-q)^{1-x},\,
0<q<1,\end{equation}
and
\begin{equation}\label{eqgamma2}
\Gamma_q(x)=\frac{(q^{-1};q^{-1})_\infty}{(q^{-x};q^{-1})_\infty}(q-1)^{1-x}q^{\binom{x}{2}},\,q>1,
\end{equation}
where $(a;q)_\infty=\prod_{j\geq0}(1-aq^j)$.

The $q$-gamma function has the following integral representation (see \cite{As})
$$\Gamma_q(t)=\int_{0}^{\infty}x^{t-1}E_q^{-qx}d_qx,$$
where $E_q^x=\sum_{j=0}^{\infty}q^{\frac{j(j-1)}{2}}\frac{x^j}{[j]!}=(1+(1-q)x)_q^{\infty},$
which is the $q$-analogue of the classical exponential function.
The $q$-analogue of the psi function is defined for $0<q<1$ as the
logarithmic derivative of the $q$-gamma function, that is,
$\psi_q(x)=\frac{d}{dx} \log \Gamma_q(x)$.
Many properties of the $q$-gamma function were derived by Askey
\cite{As}. It is well known that $\Gamma_q(x)\rightarrow\Gamma(x)$
and $\psi_q(x)\rightarrow\psi(x)$ as $q\rightarrow1^{-}$. From
\eqref{eqgamma1}, for $0<q<1$ and $x>0$ we get
\begin{equation}\label{eqpsiq1}
\psi_q(x)=-\log(1-q)+\log
q\sum_{n\geq0}\frac{q^{n+x}}{1-q^{n+x}}=-\log(1-q)+\log
q\sum_{n\geq1}\frac{q^{nx}}{1-q^n},
\end{equation}
and from \eqref{eqgamma2} for $q>1$ and $x>0$ we obtain
\begin{equation}\label{eqpsq2}
\begin{array}{ll}
\psi_q(x)&=-\log(q-1)+\log q\left(x-\frac{1}{2}-\sum\limits_{n\geq0}\frac{q^{-n-x}}{1-q^{-n-x}}\right)\\
&=-\log(q-1)+\log q
\left(x-\frac{1}{2}-\sum\limits_{n\geq1}\frac{q^{-nx}}{1-q^{-n}}\right).
\end{array}
\end{equation}
A Stieltjes integral representation for $\psi_q(x)$ with $0<q<1$ is
given in \cite{IM}. It is well-known that $\psi'$ is strictly
completely monotonic on $(0,\infty)$, that is,
$$(-1)^n(\psi'(x))^{(n)}>0\quad \mbox{for }x>0\mbox{ and }n\geq0,$$
see \cite[Page 260]{AS}. From \eqref{eqpsiq1} and \eqref{eqpsq2} we
conclude that $\psi_q'$ has the same property for any $q > 0$,
$$(-1)^n(\psi_q'(x))^{(n)}>0\quad \mbox{for }x>0\mbox{ and }n\geq0.$$
If $q\in (0,1)$, using the second representation of $\psi_q(x)$
given in \eqref{eqpsiq1} can be shown that
\begin{equation}\label{in07}
\psi_q^{(k)}(x)=\log^{k+1}q\sum\limits_{n\geq1}\frac{n^k\cdot
q^{nx}}{1-q^n},
\end{equation}
and hence $(-1)^{k-1}\psi_q^{(k)}(x)>0$ with $x>1$, for all
$k\geq1$. If $q>1$, from the second representation of $\psi_q(x)$
given in \eqref{eqpsq2} we obtain
\begin{equation}\label{eqpsq03}
\psi'_q(x)=\log q\Big(1+\sum\limits_{n\geq
1}\frac{nq^{-nx}}{1-q^{-nx}}\Big),
\end{equation}
and for $k\geq 2$,
\begin{equation}\label{eqpsq3}
\psi^{(k)}_q(x)=(-1)^{k-1} \log^{k+1} q\sum\limits_{n\geq 1}\frac{
n^kq^{-nx}}{1-q^{-nx}}.
\end{equation}
Hence, $(-1)^{k-1}\psi^{(k)}_q(x)>0$ with $x>0$, for all $q>1$.

\begin{definition} For $x>0$, $p\in N$  and for $q \in (0,1)$
\begin{equation}\label{ekuacioni} \Gamma_{p,q}(x)=\frac{[p]_q^{x}[p]_q!}{[x]_q[x+1]_q\cdots [x+p]_q},
\end{equation}where $[p]_q=\frac{1-q^p}{1-q}$.
\end{definition}

It is easy to see than $\Gamma_{p,q}(x)$ fit into the following commutative diagrams:
$$\begin{CD}
\Gamma_{p,q}(x) @>p\to\infty >> \Gamma_q(x)\\
@VVq\to 1V @VVq\to 1V\\
\Gamma_p(x) @>p\to \infty>> \Gamma(x)
\end{CD}$$

We define  $(p,q)$-analogue of the psi function   as the
logarithmic derivative of the $p,q$-gamma function, that is,
\begin{equation}\label{psiPQ}\psi_{p,q}(x)=\frac{d}{dx} \log \Gamma_{p,q}(x).\end{equation}

\begin{definition} The function $f$ is called log-convex if for all $\alpha, \beta >0$  such that $\alpha+\beta=1$ and for all $x,y>0$ the following inequality holds
$$\log f(\alpha x+\beta y)\leq \alpha \log f(x) + \beta \log f(y),$$
or equivalently
$$f(\alpha x+\beta y)\leq (f(x))^{\alpha}\cdot (f(y))^{\beta}.$$
\end{definition}
Now, we will give some definitions about completely monotonic function:

A function $f$ is said to be {\em completely monotonic} on an open interval $I$, if $f$ has derivatives of all orders on $I$
and satisfies
\begin{equation}\label{eq0}
(-1)^nf^{(n)}(x)\geq 0,   (x\in I, n=0,1,2,\ldots ).
\end{equation}
If the inequality \eqref{eq0} is strict, then $f$ is said to be {\em strictly completely monotonic} on $I$.

A positive function $f$ is said to be {\em logarithmically completely monotonic} (see \cite{QI}) on an open interval $I$, if $f$ satisfies
\begin{equation}\label{eq01}
(-1)^n[\ln f(x)]^{(n)}\geq 0, (x\in I, n=1,2,\ldots ).
\end{equation}
If the inequality \eqref{eq01} is strict, then $f$ is said to be {\em strictly logarithmically completely monotonic}.

Let C and L denote the set of completely monotonic functions and the set of logarithmically completely monotonic functions, respectively. The relationship between completely monotonic functions and logarithmically completely monotonic functions can be presented (see [2]) by $L\subset C$.

The following theorem gives an integral characterization of completely monotone functions.

\begin{theorem}
(Hausdorff- Bernstein- Widder Theorem) A function $ \varphi :[0,\infty)\rightarrow R$ is completely monotone on $[0,\infty)$ if and only if it is the Laplace transform of a finite non-negative Borel meassure $\mu$ on $[0,\infty),$ i.e., $ \varphi$ is of the form
\begin{equation}
\varphi (r)=\int\limits_{0}^{\infty}e^{-rt}d\mu (t).
\end{equation}
\end{theorem}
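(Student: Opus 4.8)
The plan is to prove the two implications separately, the forward (sufficiency) direction being routine and the converse (necessity) carrying essentially all the difficulty.

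For sufficiency, suppose $\varphi(r)=\int_0^\infty e^{-rt}\,d\mu(t)$ with $\mu$ a finite non-negative Borel measure. Since $\mu$ is finite and, for every $r$ in a compact subinterval of $(0,\infty)$, the functions $t\mapsto t^n e^{-rt}$ are bounded and $\mu$-integrable, I would differentiate under the integral sign $n$ times (justifying this by dominated convergence) to obtain
$$\varphi^{(n)}(r)=\int_0^\infty (-t)^n e^{-rt}\,d\mu(t),$$
so that $(-1)^n\varphi^{(n)}(r)=\int_0^\infty t^n e^{-rt}\,d\mu(t)\ge 0$. This is precisely the definition \eqref{eq0} of complete monotonicity, so this half is immediate.

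For necessity, assume $\varphi$ is completely monotone. The strategy is to discretize, invoke the classical Hausdorff moment theorem, and then pass to a continuous limit. First I would fix a step $h>0$ and set $a_n=\varphi(nh)$. Using the identity $\Delta_h^k\varphi(x)=h^k\varphi^{(k)}(\xi)$ for some intermediate point $\xi$ (iterated mean value theorem) together with the sign conditions $(-1)^k\varphi^{(k)}\ge 0$, one sees that the forward differences satisfy $(-1)^k\Delta^k a_n\ge 0$; that is, $(a_n)_{n\ge 0}$ is a completely monotone sequence. The Hausdorff moment theorem then provides a non-negative measure $\sigma_h$ on $[0,1]$ with $a_n=\int_0^1 s^n\,d\sigma_h(s)$, and the substitution $s=e^{-ht}$ converts this into a Laplace representation $\varphi(nh)=\int_0^\infty e^{-nht}\,d\nu_h(t)$ valid on the grid $\{nh:n\ge 0\}$. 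The total masses are uniformly bounded, since $\nu_h([0,\infty))=\sigma_h([0,1])=\varphi(0^+)=\lim_{r\to 0^+}\varphi(r)$, which is finite because $\varphi$ is non-negative and non-increasing.

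The hard part is the passage to the limit as $h\to 0$. Here I would apply Helly's selection theorem to extract from $(\nu_h)$ a weakly convergent subsequence with limit $\mu$, verify tightness so that no mass escapes to infinity, and check that the corresponding Laplace transforms converge pointwise; continuity of $\varphi$ then upgrades the identity from the dense set of grid points to all of $(0,\infty)$, yielding $\varphi(r)=\int_0^\infty e^{-rt}\,d\mu(t)$, with finiteness of $\mu$ again following from $\mu([0,\infty))=\varphi(0^+)<\infty$. Uniqueness of the representing measure is a separate and easier matter, following from the injectivity of the Laplace transform on finite Borel measures. I expect the genuinely delicate points to be the control of escaping mass (tightness) in the Helly argument and the careful justification that the discrete moment measures converge to an honest representing measure rather than losing mass in the limit.
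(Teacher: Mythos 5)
The paper does not prove this statement at all: it quotes the Hausdorff--Bernstein--Widder theorem as classical background, without argument, and uses it only as a characterization of completely monotone functions. So there is no proof of record to compare yours against, and your proposal must be judged on its own merits. Judged that way, it is the standard classical argument (essentially the one in Widder's \emph{The Laplace Transform} and in Feller, Vol.~II): the easy direction by differentiation under the integral sign, the hard direction by discretizing to the completely monotone sequence $a_n=\varphi(nh)$, invoking the Hausdorff moment theorem on $[0,1]$, substituting $s=e^{-ht}$, and letting $h\to 0$ via Helly selection. The outline is correct, and you have correctly isolated where the real work lies. Two technical remarks if you were to write it out in full: (i) the Hausdorff measure $\sigma_h$ may carry an atom at $s=0$, which under $s=e^{-ht}$ corresponds to mass at $t=+\infty$; that atom contributes only to the $n=0$ moment, so the grid representation $\varphi(nh)=\int_0^\infty e^{-nht}\,d\nu_h(t)$ should be asserted for $n\ge 1$, with the uniform mass bound $\nu_h([0,\infty))\le \varphi(0)$ still valid. (ii) You do not actually need a tightness argument to close the proof for $r>0$: since $t\mapsto e^{-rt}$ vanishes at infinity, vague (Helly) convergence already yields $\varphi(r)=\int_0^\infty e^{-rt}\,d\mu(t)$ on a dense set of grid points, hence for all $r>0$ by continuity of both sides; the finiteness of $\mu$ and the identity at $r=0$ are then recovered a posteriori from monotone convergence as $r\downarrow 0$ together with continuity of $\varphi$ at $0$, which is part of the hypothesis of complete monotonicity on $[0,\infty)$. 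With those two points patched, your sketch expands into a complete and correct proof.
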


\begin{itemize}
\item A non-negative finite linear combination of completely monotone functions is completely monotone.
\item The product of two completely monotone functions is completely monotone.
\end{itemize}

\section{Main results}
\begin{lemma}\label{lema1}
For $\alpha, \beta\geq 0$  such that $\alpha+\beta=1$ we have:
\begin{equation}[1+x]_q^\alpha [1+y]_q^\beta\leq [1+\alpha x+\beta y]_q.
\end{equation}
\end{lemma}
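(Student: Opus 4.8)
The plan is to strip away the common factors using the constraint $\alpha+\beta=1$ and then reduce the claim to a clean, $x,y$-free inequality in two numbers lying in $(0,1)$. Writing $[z]_q=\frac{1-q^z}{1-q}$, the left-hand side becomes
$$[1+x]_q^\alpha[1+y]_q^\beta=\frac{(1-q^{1+x})^\alpha(1-q^{1+y})^\beta}{(1-q)^{\alpha+\beta}}=\frac{(1-q^{1+x})^\alpha(1-q^{1+y})^\beta}{1-q},$$
so the factor $1-q$ matches the denominator of $[1+\alpha x+\beta y]_q$ and the asserted inequality is equivalent to
$$(1-q^{1+x})^\alpha(1-q^{1+y})^\beta\leq 1-q^{1+\alpha x+\beta y}.$$

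Next I would introduce $a=q^{1+x}$ and $b=q^{1+y}$, noting that since $0<q<1$ and $x,y>0$ we have $a,b\in(0,1)$. The crucial observation is that the exponent on the right factors as $1+\alpha x+\beta y=\alpha(1+x)+\beta(1+y)$, whence $q^{1+\alpha x+\beta y}=(q^{1+x})^\alpha(q^{1+y})^\beta=a^\alpha b^\beta$. The whole statement therefore collapses to the symmetric inequality
$$(1-a)^\alpha(1-b)^\beta\leq 1-a^\alpha b^\beta,\qquad a,b\in(0,1),$$
equivalently $(1-a)^\alpha(1-b)^\beta+a^\alpha b^\beta\leq 1$.

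To finish I would apply the weighted arithmetic--geometric mean inequality with weights $\alpha,\beta$ (legitimate because $\alpha+\beta=1$) to the two products separately, obtaining $a^\alpha b^\beta\leq \alpha a+\beta b$ and $(1-a)^\alpha(1-b)^\beta\leq \alpha(1-a)+\beta(1-b)$. Adding these two bounds, the right-hand sides sum to $(\alpha a+\beta b)+(\alpha+\beta)-(\alpha a+\beta b)=1$, which is exactly what is needed; unwinding the substitutions then yields the lemma.

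The computation is short, so I do not expect a genuine obstacle. The single step of insight is recognizing the exponent factorization $1+\alpha x+\beta y=\alpha(1+x)+\beta(1+y)$, which recasts the right-hand term as a geometric mean and thereby exposes the symmetric structure; after that, applying AM--GM twice in the complementary pair $(a,b)$ and $(1-a,1-b)$ makes the cross terms cancel. Care is only needed to confirm $a,b\in(0,1)$, so that $1-a$ and $1-b$ are non-negative and the AM--GM step is valid.
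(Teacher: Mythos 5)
Your proof is correct and is essentially the paper's own argument: both rest on the same two applications of weighted AM--GM (Young's inequality), one to the pair $[1+x]_q,[1+y]_q$ (your $1-a,1-b$) and one to the pair $q^{1+x},q^{1+y}$ (your $a,b$), together with the factorization $1+\alpha x+\beta y=\alpha(1+x)+\beta(1+y)$. The only difference is presentational --- you normalize away the factor $1-q$ and add the two bounds, whereas the paper chains them sequentially.
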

\begin{proof}
From Youngs inequlity
\begin{equation}
[x]_q^\alpha [y]_q^\beta\leq \alpha [x]_q+\beta[y]_q,
\end{equation}
we have:
\begin{align*}[1+x]_q^\alpha [1+y]_q^\beta& \leq \alpha [1+x]_q+\beta[1+y]_q\noindent \\
\noindent &=\alpha \Big( \frac{1-q^{1+x}}{1-q}\Big)+\beta\Big( \frac{1-q^{1+y}}{1-q}\Big)\noindent \\
\noindent &=\frac{1}{1-q}\Big[1-(\alpha q^{1+x}+\beta q^{1+y})\Big]\end{align*}
We have to  prove:
\begin{equation}\alpha q^{1+x}+\beta q^{1+y}\geq q^{1+\alpha x+\beta y}. \end{equation}
From Youngs inequality we have:
$$q^{1+\alpha x+\beta y}=q((q^x)^\alpha (q^x)^\beta )\leq q(\alpha q^x+\beta q^y)=\alpha q^{1+x}+\beta q^{1+y}. $$
\end{proof}

\begin{theorem}\label{logcon0} The function
\[
\Gamma_{p,q}(x)=\frac{[p]_q^{x}[p]_q!}{[x]_q[x+1]_q\cdots [x+p]_q}
\]
is log-convex.
\end{theorem}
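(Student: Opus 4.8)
The plan is to verify the defining functional inequality for log-convexity directly, namely $\Gamma_{p,q}(\alpha x+\beta y)\le \Gamma_{p,q}(x)^{\alpha}\,\Gamma_{p,q}(y)^{\beta}$ for $\alpha,\beta\ge 0$ with $\alpha+\beta=1$, rather than computing second derivatives. First I would substitute the product formula \eqref{ekuacioni} into both sides and simplify. Since the exponent of $[p]_q$ is affine in the argument and $([p]_q!)^{\alpha+\beta}=[p]_q!$, the factors $[p]_q^{\alpha x+\beta y}[p]_q!$ appear identically on both sides and cancel. What remains is the equivalent inequality
\[
\prod_{k=0}^{p}[x+k]_q^{\alpha}\,[y+k]_q^{\beta}\ \le\ \prod_{k=0}^{p}[\alpha x+\beta y+k]_q ,
\]
all factors being positive because $x,y>0$ and $0<q<1$ force $[x+k]_q,[y+k]_q>0$.

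Next I would prove this by comparing the two products factor by factor: it suffices to show, for each $k\in\{0,1,\dots,p\}$,
\[
[x+k]_q^{\alpha}\,[y+k]_q^{\beta}\ \le\ [\alpha(x+k)+\beta(y+k)]_q=[\alpha x+\beta y+k]_q .
\]
This is exactly the log-concavity of the map $u\mapsto[u]_q$, and it is the content of Lemma~\ref{lema1} once one observes that the shift ``$1+$'' there is inessential. Indeed, rerunning that argument with general positive arguments $u=x+k$, $v=y+k$: Young's inequality gives $[u]_q^{\alpha}[v]_q^{\beta}\le \alpha[u]_q+\beta[v]_q=\frac{1}{1-q}\bigl(1-(\alpha q^{u}+\beta q^{v})\bigr)$, while a second application of Young's inequality to $(q^{u})^{\alpha}(q^{v})^{\beta}$ yields $\alpha q^{u}+\beta q^{v}\ge q^{\alpha u+\beta v}$; since $1-q>0$, combining these gives $[u]_q^{\alpha}[v]_q^{\beta}\le[\alpha u+\beta v]_q$. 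Taking the product of these $p+1$ inequalities over $k$ then establishes the reduced inequality, and hence the theorem.

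The step I expect to require the most care is the factor-by-factor reduction at $k=0$. Lemma~\ref{lema1} as stated applies to arguments of the form $1+x$ with $x>0$, i.e.\ to $[u]_q$ with $u>1$; but the term $k=0$ needs the inequality for $u=x$ and $v=y$, which may lie in $(0,1)$. The remedy is the observation above that the proof of Lemma~\ref{lema1} in fact holds verbatim for all $u,v>0$, so no genuine extra hypothesis is needed. An alternative route, avoiding the product manipulation, is to note that $\log\Gamma_{p,q}(x)=x\log[p]_q+\log[p]_q!-\sum_{k=0}^{p}\log[x+k]_q$ and to prove convexity of the right-hand side directly; there the linear term is harmless and one must show that each $u\mapsto-\log[u]_q$ is convex, which is the same log-concavity statement reached from a different direction.
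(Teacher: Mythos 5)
Your proof is correct, and it follows the same basic strategy as the paper --- reduce the functional inequality \eqref{logcon2} to a factor-by-factor application of the log-concavity of the $q$-bracket, proved by two applications of Young's inequality --- but your decomposition is different and, as it happens, it is the one that actually works. The paper applies Lemma~\ref{lema1} to the factors $\big[1+\frac{x}{k}\big]_q$ for $k=1,\dots,p$ and then ``takes reciprocals and multiplies by $[p]_q^{\alpha x+\beta y}$''; this tacitly assumes the $q$-analogue of the second expression in \eqref{eqgammap1}, namely that $[x+k]_q$ factors as $[k]_q\big[1+\frac{x}{k}\big]_q$. That identity fails for the $q$-bracket: one has $[x+k]_q=[k]_q\big[1+\frac{x}{k}\big]_{q^k}$, with base $q^k$ rather than $q$ (the identity the paper is imitating holds only in the limit $q\to1$, i.e.\ for $\Gamma_p$). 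In addition, the paper's product starts at $k=1$, so the factor $[x]_q$ is never compared at all. Your version avoids both problems: after cancelling $[p]_q^{\alpha x+\beta y}[p]_q!$ you compare exactly the factors $[x+k]_q$, $k=0,1,\dots,p$, that occur in \eqref{ekuacioni}, and the only input you need is the inequality $[u]_q^{\alpha}[v]_q^{\beta}\le[\alpha u+\beta v]_q$ for all $u,v>0$, which you correctly obtain by rerunning the proof of Lemma~\ref{lema1} with the inessential shift ``$1+$'' removed. Notably, the step you singled out as requiring the most care --- the $k=0$ factor, where Lemma~\ref{lema1} as stated does not literally apply --- is precisely the point on which the paper's own proof is silent; your explicit treatment of it, together with the observation that the lemma's proof is shift-free, is what makes the argument complete. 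In short: same key inequality, but your bookkeeping is the rigorous one, and it would serve as a correction to the proof printed in the paper.
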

\begin{proof} We have to prove that for all $\alpha, \beta >0, \alpha+\beta =1, x,y>0$
\begin{equation}\label{logcon1}
\log \Gamma_{p,q}(\alpha x+\beta y)\leq \alpha \log \Gamma_{p,q}(x)+\beta \log \Gamma_{p,q}(y),
\end{equation}
which is equivalent to
\begin{equation}\label{logcon2}
\Gamma_{p,q}(\alpha x+\beta y)\leq (\Gamma_{p,q}(x))^{\alpha}\cdot (\Gamma_{p,q}(y))^{\beta}.
\end{equation}
By lemma \ref{lema1} we obtain:
\begin{equation}\label{logcon4}
\Big[1+\frac{x}{k}\Big]_q^{\alpha}\cdot \Big[1+\frac{y}{k}\Big]_q^{\beta}\leq \alpha \Big[1+\frac{x}{k}\Big]_q+\beta\Big[1+\frac{y}{k}\Big]_q=\Big[1+\frac{\alpha x+\beta y}{k}\Big]_q
\end{equation}
for all $k\geq 1, k\in \mathbf{\mathbf{N}}$.

Multiplying \eqref{logcon4} for $k=1,2,\ldots, p$ one obtains
\[
\Big[1+\frac{x}{1}\Big]_q^{\alpha} \ldots \Big[1+\frac{x}{p}\Big]_q^{\alpha}\cdot\Big[1+\frac{y}{1}\Big]_q^{\beta}\ldots \Big[1+\frac{y}{p}\Big]_q^{\beta}\leq \Big[1+\frac{\alpha x+\beta y}{1}\Big]_q \ldots \Big[1+\frac{\alpha x+\beta y}{p}\Big]_q.
\]

\noindent Now, taking the reciprocal values and multiplying by $\displaystyle [p]_q^{\alpha x+\beta y}$ one obtains \eqref{logcon2} and thus the proof is completed.
\end{proof}

\begin{lemma}
a) The function $\psi_{p,q}$ defined by \eqref{psiPQ} has the following series representation and integral representation
\begin{equation}\label{psipq}
\psi_{p,q}(x)=\ln [p]_q+\log q\sum\limits_{k=0}^{p}\frac{q^{x+k}}{1-q^{x+k}},
\end{equation}
\begin{equation}\label{psipqintegral}
\psi_{p,q}(x)=\ln [p]_q-\int\limits_{0}^{\infty}\frac{e^{-xt}}{1-e^{-t}}(1-e^{-pt})d\gamma_q(t).
\end{equation}
where $\gamma_q(t)$ is a discrete measure with positive masses $-\log q$ at the positive points $-k\log q, k=1,2,\ldots , $i.e.
\begin{equation} \gamma_q(t)=-\log q\sum\limits_{k=1}^{\infty}\delta (t+k\log q), \quad 0<q<1.\end{equation}
b) The function $\psi_{p,q}$ is increasing on $(0,\infty)$.\\
c) The function $\psi'_{p,q}$ is strictly completely monotonic on
$(0,\infty).$
\end{lemma}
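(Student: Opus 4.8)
The plan is to prove (a) by a direct computation that produces the series, and then read (b) and (c) off the resulting expansion.

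For (a) I would start from the defining product and write
\[
\log\Gamma_{p,q}(x)=x\log[p]_q+\log([p]_q!)-\sum_{k=0}^{p}\log[x+k]_q ,
\]
using $[x+k]_q=(1-q^{x+k})/(1-q)$ so that $\log[x+k]_q=\log(1-q^{x+k})-\log(1-q)$. Differentiating the finite sum in $x$ (no convergence issue) and using $\frac{d}{dx}\log(1-q^{x+k})=-\,q^{x+k}\log q/(1-q^{x+k})$ gives exactly
\[
\psi_{p,q}(x)=\log[p]_q+\log q\sum_{k=0}^{p}\frac{q^{x+k}}{1-q^{x+k}},
\]
which is \eqref{psipq}. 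To reach the integral form \eqref{psipqintegral}, I would expand each summand geometrically, $\frac{q^{x+k}}{1-q^{x+k}}=\sum_{n\ge1}q^{n(x+k)}$, interchange the (positive-term) summations, and evaluate the finite progression $\sum_{k=0}^{p}q^{nk}=(1-q^{n(p+1)})/(1-q^n)$. Writing $t_k=-k\log q>0$ and recalling that $\gamma_q$ carries mass $-\log q$ at each $t_k$, one recognizes the resulting Dirichlet series in $q^{nx}=e^{-n(-\log q)x}$ as $\int_0^\infty\frac{e^{-xt}}{1-e^{-t}}(1-e^{-(p+1)t})\,d\gamma_q(t)$; this matches the statement up to the exponent in the factor $1-e^{-pt}$, since the $k$-summation yields $1-e^{-(p+1)t}$, in line with \eqref{eq6-1}.

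For (b) it suffices to differentiate \eqref{psipq} once more. Since $\frac{d}{dx}\frac{q^{x+k}}{1-q^{x+k}}=q^{x+k}\log q/(1-q^{x+k})^2$, I obtain
\[
\psi_{p,q}'(x)=\log^2 q\sum_{k=0}^{p}\frac{q^{x+k}}{(1-q^{x+k})^2},
\]
and every summand is positive because $0<q^{x+k}<1$; hence $\psi_{p,q}'>0$ on $(0,\infty)$ and $\psi_{p,q}$ is increasing.

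For (c) I would exhibit $\psi_{p,q}'$ as the Laplace transform of a nonnegative measure and invoke the Hausdorff–Bernstein–Widder theorem. Expanding $\frac{u}{(1-u)^2}=\sum_{n\ge1}nu^n$ with $u=q^{x+k}$ and performing the finite $k$-summation as in (a) yields
\[
\psi_{p,q}'(x)=\log^2 q\sum_{n\ge1}\frac{n\,(1-q^{n(p+1)})}{1-q^n}\,e^{-n(-\log q)x}=\sum_{n\ge1}c_n e^{-a_n x},
\]
with $a_n=-n\log q>0$ and $c_n=\log^2 q\cdot n(1-q^{n(p+1)})/(1-q^n)>0$ for $0<q<1$. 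Each $e^{-a_nx}$ is strictly completely monotonic, and a convergent series with positive coefficients of such functions is again completely monotonic; differentiating term by term gives $(-1)^m(\psi_{p,q}')^{(m)}(x)=\sum_{n\ge1}c_n a_n^m e^{-a_nx}>0$ for all $m\ge0$ and $x>0$. Equivalently, $\psi_{p,q}'$ is the Laplace transform of the discrete measure placing mass $c_n$ at $a_n$, so the cited theorem applies directly. The one point requiring care is the legitimacy of this term-by-term differentiation: on any $[x_0,\infty)\subset(0,\infty)$ the coefficients $c_na_n^m$ grow only polynomially in $n$ while $e^{-a_nx}$ decays geometrically (as $a_n=n(-\log q)$ grows linearly), so each differentiated series converges uniformly; the only other thing to watch is the sign bookkeeping from $\log q<0$ and the $1-e^{-pt}$ versus $1-e^{-(p+1)t}$ discrepancy noted in (a).
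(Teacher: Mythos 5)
Your proof is correct and follows essentially the same route as the paper: logarithmic differentiation of the defining product gives the series, the discrete measure $\gamma_q$ (equivalently, the geometric expansion you use) converts it to the Laplace-transform form, and parts (b) and (c) follow by inspecting the signs of the differentiated representation, exactly as the paper does by differentiating under the integral against $\gamma_q$. You also rightly flag that the factor $1-e^{-pt}$ in the stated integral representation should be $1-e^{-(p+1)t}$, since the sum over $k=0,\dots,p$ has $p+1$ terms; the paper itself confirms this by using $1-e^{-(p+1)t}$ in its proof of part (c).
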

\begin{proof} a) After logarithmical and derivative of \eqref{ekuacioni} we take \eqref{psipq}.

Using $\int\limits_{0}^{\infty}e^{-xt}d\gamma_q(t)=\frac{-q^x\log q}{1-q^x}, 0<q<1$ (see \cite{Integrali}) we take \eqref{psipqintegral}.

b) Let $0<x<y$. Using \eqref{psipq} we obtain
\begin{align*}
\psi_{p,q}(x)-\psi_{p,q}(y)&=\log q\sum_{k=0}^{p}\Big(\frac{1-q^{y+k}}{q^{y+k}}-\frac{1-q^{y+k}}{q^{y+k}}\Big)\\
 \notag &=\log q\sum_{k=0}^{p}\Big(\frac{q^x-q^{x+y+k}-q^y+q^{x+y+k}}{q^{x+y+k}}\Big)\\
 \notag &=\log q\sum_{k=0}^{p}\Big(\frac{q^x-q^y}{q^{x+y+k}}\Big)<0.
\end{align*}

c) Deriving $n$ times the relation \eqref{psipq}  one finds that:
\begin{equation}\label{psi_series2}
\psi_{p,q} ^{(n)}(x)=(-1)^{n+1}\int\limits_{0}^{\infty}\frac{t^ne^{-xt}}{1-e^{-t}}(1-e^{-(p+1)t})d\gamma_q(t).
\end{equation}
Hence $(-1)^n(\psi'_{p,q}(x))^{(n)}>0$, for $x>0, n\geq 0.$
\end{proof}

\begin{remark}
$\psi_{p,q}(x)$ fit into the following commutative diagrams$$\begin{CD}
\psi_{p,q}(x) @>p\to\infty >> \psi_q(x)\\
@VVq\to 1V @VVq\to 1V\\
\psi_p(x) @>p\to \infty>> \psi(x)
\end{CD}$$
\end{remark}

\section{Logarithmically completely monotonic  function}

\begin{theorem}\label{th2}
The function $G_{p,q}(x;a_1,b_1,\ldots,a_n,b_n)$ given by
\begin{equation}\label{eqGAO}
G_{p,q}(x)=G_{p,q}(x;a_1,b_1,\ldots,a_n,b_n)=\prod_{i=1}^{n}\frac{\Gamma_{p,q} (x+a_i)}{\Gamma_{p,q} (x+b_i)}, q\in (0,1)
\end{equation}
is a completely monotonic function on $(0,\infty)$, for any $a_i$ and $b_i$, $i=1,2,\ldots,n$, real numbers such that $0<a_1\leq \cdots \leq a_n$, $0<b_1\leq b_2\leq \cdots \leq b_n$ and $\sum_{i=1}^{k}a_i\leq \sum_{i=1}^{k}b_i$ for $k=1,2,\ldots,n$.
\end{theorem}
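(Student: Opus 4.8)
The plan is to prove the stronger assertion that $G_{p,q}$ is logarithmically completely monotonic on $(0,\infty)$; since $G_{p,q}$ is positive there, this places it in $L$, and the inclusion $L\subset C$ recorded in the introduction then yields complete monotonicity at once. Because $\psi_{p,q}=(\ln\Gamma_{p,q})'$, I would write $\ln G_{p,q}(x)=\sum_{i=1}^n\big[\ln\Gamma_{p,q}(x+a_i)-\ln\Gamma_{p,q}(x+b_i)\big]$ and differentiate term by term, so that for every $m\ge 1$
\[
[\ln G_{p,q}(x)]^{(m)}=\sum_{i=1}^{n}\Big[\psi_{p,q}^{(m-1)}(x+a_i)-\psi_{p,q}^{(m-1)}(x+b_i)\Big].
\]
First I would insert the integral representation \eqref{psi_series2} for $\psi_{p,q}^{(m-1)}$ (i.e.\ that formula with $n$ replaced by $m-1$, so the sign factor is $(-1)^{m}$) into each summand and pull the finite sum inside the integral.

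After this substitution the factor $(-1)^m$ separates cleanly and one is left with
\[
(-1)^m[\ln G_{p,q}(x)]^{(m)}=\int_0^\infty \frac{t^{m-1}e^{-xt}}{1-e^{-t}}\big(1-e^{-(p+1)t}\big)\Big(\sum_{i=1}^n\big(e^{-a_i t}-e^{-b_i t}\big)\Big)\,d\gamma_q(t).
\]
Since $0<q<1$, the measure $\gamma_q$ is nonnegative and the kernel $\tfrac{t^{m-1}e^{-xt}}{1-e^{-t}}(1-e^{-(p+1)t})$ is positive for $t>0$ and $x>0$. Hence the entire problem reduces to the single pointwise inequality $\sum_{i=1}^n e^{-a_i t}\ge \sum_{i=1}^n e^{-b_i t}$ for all $t\ge 0$.

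I expect this last inequality to be the main obstacle, and I would establish it by combining convexity with Abel summation. Writing $\phi(s)=e^{-st}$, which is convex in $s$, the tangent-line inequality $\phi(a_i)\ge\phi(b_i)+\phi'(b_i)(a_i-b_i)$ summed over $i$ gives
\[
\sum_{i=1}^n\big(e^{-a_i t}-e^{-b_i t}\big)\ge t\sum_{i=1}^n (b_i-a_i)e^{-b_i t}.
\]
Setting $w_i=e^{-b_i t}$, which is nonincreasing in $i$ because $b_i$ is nondecreasing and $t\ge0$, and $C_k=\sum_{i=1}^k(b_i-a_i)=\sum_{i=1}^k b_i-\sum_{i=1}^k a_i\ge 0$, Abel summation yields $\sum_{i=1}^n(b_i-a_i)w_i=\sum_{k=1}^{n-1}C_k(w_k-w_{k+1})+C_nw_n\ge 0$, since every $C_k\ge0$, every $w_k-w_{k+1}\ge0$, and $w_n>0$. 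This is precisely where the hypotheses $\sum_{i=1}^k a_i\le\sum_{i=1}^k b_i$ together with the monotonic orderings of the $a_i$ and $b_i$ enter.

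Combining the three steps gives $(-1)^m[\ln G_{p,q}(x)]^{(m)}\ge 0$ for all $m\ge1$ and $x>0$, so $G_{p,q}\in L\subset C$, which completes the argument. The remaining points are routine: the positivity of $G_{p,q}$ on $(0,\infty)$ is clear from \eqref{ekuacioni} since each factor $[x+j]_q>0$, and the differentiation under the integral sign is justified by the standard dominated-convergence estimates on the kernel.
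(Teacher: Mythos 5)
Your proof is correct, and its skeleton coincides with the paper's: take logarithms, differentiate, insert the integral representation \eqref{psi_series2}, pull the finite sum under the integral, and reduce everything to the pointwise inequality $\sum_{i=1}^n e^{-a_i t}\ge\sum_{i=1}^n e^{-b_i t}$ for $t\ge 0$. Note also that your concluding step and the paper's are the same fact in different clothing: the paper sets $h=-\ln G_{p,q}$, shows $h'$ is completely monotonic, and cites Bochner's theorem that $h'$ completely monotonic implies $e^{-h}$ completely monotonic, while you phrase this as logarithmic complete monotonicity of $G_{p,q}$ together with the inclusion $L\subset C$; these are identical statements. The one genuine difference is how the key inequality is established. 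The paper cites Alzer's result as a black box: for any decreasing convex $f$ and the stated ordering and partial-sum hypotheses, $\sum_{i=1}^n f(b_i)\le\sum_{i=1}^n f(a_i)$, applied to $f(z)=e^{-zt}$. You instead reprove this special case from scratch via the tangent-line inequality for the convex function $s\mapsto e^{-st}$ followed by Abel summation against the nonnegative partial sums $C_k=\sum_{i=1}^k(b_i-a_i)$; this is in fact the standard proof of Alzer's lemma, so nothing is lost, and the payoff is a self-contained argument in which one sees exactly where each hypothesis (monotonicity of the $a_i$ and $b_i$, domination of partial sums) enters, at the cost of some extra length compared with the paper's citation. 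One small point you share with the paper: for $m=1$ the formula \eqref{psi_series2} with $n=0$ omits the additive constant $\ln [p]_q$ present in \eqref{psipqintegral}, but this is harmless in both proofs because the constant cancels in the differences $\psi_{p,q}(x+a_i)-\psi_{p,q}(x+b_i)$.
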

\begin{proof}
Let $h(x)=\sum_{i=1}^{n}(\log \Gamma_{p,q}(x+b_i)-\log \Gamma_{p,q}(x+a_i))$. Then for $k\geq 0$ we have
\begin{align}
\notag (-1)^k(h'(x))^{(k)}&=(-1)^k\sum_{i=1}^{n}(\psi_{p,q}^{(k)}(x+b_i)-\psi_{p,q}^{(k)}(x+a_i))\notag\\
\notag &=(-1)^k\sum_{i=1}^{n}(-1)^{k+1}\int\limits_{0}^{\infty}\frac{t^ke^{-xt}}{1-e^{-t}}(1-e^{-(p+1)t})(e^{-bi}-e^{-ai})d\gamma_q(t)\notag\\
\notag &=(-1)^{2k+1}\int\limits_{0}^{\infty}\frac{t^ke^{-xt}}{1-e^{-t}}(1-e^{-(p+1)t})\sum_{i=1}^{n}(e^{-bi}-e^{-ai})d\gamma_q(t).\notag
\end{align}
Alzer \cite{AL0} showed that if $f$ is a decreasing and convex function on $R$, then holds
\begin{equation}\label{eqal}
\sum_{i=1}^{n}f(b_i)\leq \sum_{i=1}^{n}f(a_i).
\end{equation}
Thus, since the function $z\mapsto e^{-z}, z>0$ is decreasing and convex on $R$, we have that $\sum_{i=1}^n(e^{-ai}-e^{-bi})\geq0$, so $(-1)^k(G'_{p,q}(x))^{(k)}\geq 0$ for $k\geq 0$. Hence $h'$ is completely monotonic on $(0,\infty)$. Using the fact that if $h'$ is completely monotonic function on $(0,\infty)$, then $\exp(-h)$ is also completely monotonic function on $(0,\infty)$ (see \cite{BO}), we get the desired result.
\end{proof}

\begin{theorem}
The function \begin{equation}f(x)=\frac{1}{\left(\frac{[p]_q}{[p+1]_q}\Gamma _{p,q} (x)\right)^{\frac{1}{x}}}\end{equation}
 is logarithmically completely monotonic in $(0,\infty).$
\end{theorem}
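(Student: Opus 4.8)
The plan is to reduce the claim to the complete monotonicity of a single auxiliary function. Write $\phi(x)=\ln\!\big(\tfrac{[p]_q}{[p+1]_q}\Gamma_{p,q}(x)\big)$, so that $\ln f(x)=-\phi(x)/x$ and, by \eqref{psiPQ}, $\phi'(x)=\psi_{p,q}(x)$. Since $\ln f=-\phi/x$ gives $[\ln f]^{(n)}=-(\phi/x)^{(n)}$, the condition $(-1)^n[\ln f(x)]^{(n)}\ge 0$ for all $n\ge 1$ is equivalent to $(-1)^m\big[(\phi/x)'\big]^{(m)}\ge 0$ for all $m\ge 0$; that is,
\[
 f \text{ is logarithmically completely monotonic on }(0,\infty)\iff \Big(\tfrac{\phi(x)}{x}\Big)' \text{ is completely monotonic there.}
\]
So first I would compute $\big(\phi/x\big)'=\dfrac{x\,\psi_{p,q}(x)-\phi(x)}{x^{2}}$ and aim to show the right-hand side is completely monotonic.

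Next I would install integral representations. By part (c) of the Lemma and \eqref{psi_series2}, $\psi'_{p,q}$ is completely monotonic, and in fact $\phi''(x)=\psi'_{p,q}(x)=\int_0^\infty e^{-xt}\,d\rho(t)$ for the nonnegative measure $d\rho(t)=\frac{t\,(1-e^{-(p+1)t})}{1-e^{-t}}\,d\gamma_q(t)$. Integrating this representation twice (anchoring at $x\to\infty$, where $\psi_{p,q}(x)\to\ln[p]_q$ and $\delta(x):=x\psi_{p,q}(x)-\phi(x)$ tends to a finite limit) expresses both $\phi$ and $x\psi_{p,q}(x)$, hence $\delta$, as Laplace-type integrals against $\rho$ together with the elementary completely monotonic kernels $1/x$ and $1/x^2$. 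The target is to exhibit $\big(\phi/x\big)'=\delta(x)/x^2=\int_0^\infty e^{-xt}\,d\mu(t)$ with $\mu\ge 0$, after which the Hausdorff--Bernstein--Widder theorem finishes the argument; the product/sum stability of completely monotonic functions recorded after that theorem can be used to assemble the kernels.

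The genuine obstacle is the sign of the resulting density, because $\delta(x)=x\psi_{p,q}(x)-\phi(x)$ enters as a difference of two unbounded quantities. Equivalently, via the Leibniz rule one must show $S_n(x):=\sum_{k=0}^n\frac{(-1)^k}{k!}\,x^{k}\phi^{(k)}(x)\le 0$ for every $n\ge1$, where the terms with $k\ge 2$ are manifestly of one sign by the representation of $\phi''$, so the whole difficulty is concentrated in the low-order part $\phi(x)-x\psi_{p,q}(x)$. I would attack this with a convexity/Alzer-type estimate in the spirit of the proof of Theorem \ref{th2}, bounding the kernel $\frac{1-e^{-(p+1)t}}{1-e^{-t}}$ against the discrete measure $\gamma_q$; it is exactly here that the normalization $\frac{[p]_q}{[p+1]_q}=\Gamma_{p,q}(1)$ must be exploited to control the behaviour of $\phi(x)/x$. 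Once this positivity is secured, letting $p\to\infty$ and $q\to 1$ in the commutative diagram recovers the classical results of \cite{Valmir1} and \cite{SHA}.
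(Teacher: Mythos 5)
Your opening reduction is correct: with $\phi(x)=\ln\bigl(\tfrac{[p]_q}{[p+1]_q}\Gamma_{p,q}(x)\bigr)$, logarithmic complete monotonicity of $f$ is equivalent to complete monotonicity of $(\phi/x)'$, i.e.\ to $S_n(x)=\sum_{k=0}^{n}\frac{(-1)^k}{k!}x^k\phi^{(k)}(x)\le 0$ for all $n\ge 1$. But this is only the Leibniz rule restated; the whole theorem \emph{is} the inequality $S_n\le 0$, and your proposal never proves it. It ends by deferring exactly this point (``I would attack this with a convexity/Alzer-type estimate \dots once this positivity is secured''), so what you have is a plan, not a proof. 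Moreover, the tools you name would not close it: Alzer's inequality \eqref{eqal} requires a majorization structure between two families of shifts, which is absent here, and ``anchoring at $x\to\infty$'' determines no signs, since a monotone function with a finite limit at infinity can still change sign. What the paper actually does, and what your outline is missing, is a finishing device for each fixed $n$: put $g(x)=-x^{n+1}[\ln f(x)]^{(n)}$ (up to the factor $(-1)^n n!$ this is your $S_n$, modulo the shift discussed below), differentiate, and use $\binom{n}{k}(n-k)=\binom{n}{k+1}(k+1)$ to telescope the Leibniz sum down to the single term $g'(x)=x^n\psi_{p,q}^{(n)}(x+1)$, whose sign is known from part (c) of the lemma in Section 2; the sign of $g$, hence of $[\ln f]^{(n)}$, then follows from the anchor $g(0)=0$.

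That anchor is also where your plan genuinely fails rather than merely stalls. The paper's proof differentiates $\ln\Gamma_{p,q}(x+1)$, not $\ln\Gamma_{p,q}(x)$, and $g(0)=0$ holds precisely because the logarithm of the normalized argument vanishes at the base point, $\Gamma_{p,q}(1)=\tfrac{[p]_q}{[p+1]_q}$. For your unshifted $\phi$ no such anchor exists: as $x\to 0^{+}$ one has $\phi(x)\to+\infty$ (because $[x]_q\to 0$) while $x\psi_{p,q}(x)\to -1$, so already $S_1(x)=\phi(x)-x\psi_{p,q}(x)>0$ near the origin; equivalently $f(0^{+})=0<f(1)$, so $f$ is not even decreasing there, and no estimate can rescue the positivity you postponed. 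In other words, the inequality you reduced the theorem to is false in a neighbourhood of $x=0$ for the function exactly as written; the telescoping-plus-anchor argument goes through only for $\bigl(\Gamma_{p,q}(x+1)/\Gamma_{p,q}(1)\bigr)^{-1/x}$, which is the function the paper's own proof tacitly treats (its statement and proof are inconsistent on this point). Your observation that $\tfrac{[p]_q}{[p+1]_q}=\Gamma_{p,q}(1)$ is the right clue, but it must be exploited through the shift $x\mapsto x+1$ and the boundary value $g(0)=0$, not through behaviour at infinity.
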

\begin{proof}
Using Leibnitz rule
$$[u(x)v(x)]^{(n)}=\sum\limits_{k=0}^{n}\binom{n}{k}u^{(k)}(x)v^{(n-k)}(x),$$
we obtain
\begin{align*}[\ln f(x)]^{(n)}&=\sum\limits_{k=0}^{n}\binom{n}{k}\Big(\frac{1}{x}\Big)^{(k)}\Big(-\ln \Gamma _{p,q}(x+1)\Big)^{(n-k)}\notag \\
\notag &=-\frac{1}{x^{n+1}}\sum\limits_{k=0}^{n}\binom{n}{k}(-1)^kk!x^{n-k}\psi _{p,q} ^{(n-k-1)}(x+1)\notag \\
\notag &= -\frac{1}{x^{n+1}}g(x)\end{align*}
and
\begin{align*}g^{'}(x)&=\sum\limits_{k=0}^{n}\binom{n}{k}(-1)^kk!(n-k)x^{n-k-1}\psi_{p,q}^{(n-k-1)}(x+1)+\notag \\
\notag &+\sum\limits_{k=0}^{n}\binom{n}{k}(-1)^kk!x^{n-k}\psi_{p,q}^{(n-k)}(x+1)\notag\\
\notag &=\sum\limits_{k=0}^{n-1}\binom{n}{k}(-1)^kk!(n-k)x^{n-k-1}\psi_{p,q}^{(n-k-1)}(x+1)+\notag \\
\notag &+x^n\psi_{p,q}^{(n)}(x+1)+\sum\limits_{k=0}^{n}\binom{n}{k}(-1)^kk!x^{n-k}\psi_{p,q}^{(n-k)}(x+1)\notag\\
\notag &=\sum\limits_{k=0}^{n-1}\binom{n}{k}(-1)^kk!(n-k)x^{n-k-1}\psi_{p,q}^{(n-k-1)}(x+1)+\notag \\
\notag &+x^n\psi_{p,q}^{(n)}(x+1)+\sum\limits_{k=0}^{n-1}\binom{n}{k+1}(-1)^{k+1}(k+1)!x^{n-k-1}\psi_{p,q}^{(n-k-1)}(x+1)\notag\\
\notag &=\sum\limits_{k=0}^{n-1}\Big[\binom{n}{k}(n-k)-\binom{n}{k+1}(k+1)\Big](-1)^kk!x^{n-k-1}\psi_{p,q}^{(n-k-1)}(x+1)\notag\\
\notag &+x^n\psi_p^{(n)}(x+1)=x^n\psi_{p,q}^{(n)}(x+1).
\end{align*}
If $n$ is odd, then for $x>0,$
$$g^{'}(x)>0\Rightarrow g(x)>g(0)=0\Rightarrow (\ln f(x))^{(n)}<0\Rightarrow$$
$$\Rightarrow (-1)^n(\ln f(x))^{(n)}>0.$$
If $n$ is even, then for $x>0,$
$$g^{'}(x)<0\Rightarrow g(x)<g(0)=0\Rightarrow (\ln f(x))^{(n)}>0\Rightarrow$$
$$\Rightarrow (-1)^n(\ln f(x))^{(n)}>0.$$
Hence,
$$(-1)^n(\ln f(x))^{(n)}>0$$
for all real $x\in (0,\infty)$ and all integers $n\geq 1.$ The proof is completed.
\end{proof}

\begin{remark}Let $p$ tend to $\infty,$ then we obtain Theorem 1 of \cite{chen}. Let $q$ tend to 1, then we obtain Theorem 2.1 of \cite{faton}.
\end{remark}

Let $s$ and $t$ be two real numbers with $s\not= t, \alpha =\min \{s,t\}$ and $\beta \geq -\alpha,$ for $x\in (-\alpha, \alpha),$ define
$$h_{\beta,p,q}(x)=\left\{\begin{array}{cc}
\Big[\frac{\Gamma _{p,q}(\beta+t)}{\Gamma_{p,q}(\beta+s)}\cdot\frac{\Gamma _{p,q}(x+s)}{\Gamma_{p,q}(x+t)}\Big]^{\frac{1}{x-\beta}}&x\not= \beta\\[10pt]
\exp [\psi_{p,q} (\beta+s)-\psi_{p,q}(\beta+t)]&x=\beta
\end{array}\right.$$
The following theorem is a generalization of a result of \cite{Q}.

\begin{theorem} The function $h_{\beta,p,q}(x)$ is logarithmically completely monotonic on\break $(-\alpha, +\infty)$ if $s>t.$
\end{theorem}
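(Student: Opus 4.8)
The plan is to recognize $\ln h_{\beta,p,q}$ as a divided difference of the function
$$\phi(x):=\ln\Gamma_{p,q}(x+s)-\ln\Gamma_{p,q}(x+t),$$
and then to read off the required sign pattern from the integral representation \eqref{psi_series2} of the derivatives of $\psi_{p,q}$. Since $[z]_q>0$ for $z>0$, we have $\Gamma_{p,q}>0$ on the relevant range, so $h_{\beta,p,q}>0$ and $\ln h_{\beta,p,q}$ is well defined. First I would observe that for $x\neq\beta$
$$\ln h_{\beta,p,q}(x)=\frac{\phi(x)-\phi(\beta)}{x-\beta},$$
while at $x=\beta$ the prescribed value is $\psi_{p,q}(\beta+s)-\psi_{p,q}(\beta+t)=\phi'(\beta)$, which is exactly the limit of the divided difference. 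Hence on the whole interval one has the single smooth representation
$$\ln h_{\beta,p,q}(x)=\int_0^1\phi'\bigl(\beta+u(x-\beta)\bigr)\,du,$$
coming from $\phi(x)-\phi(\beta)=\int_0^1\frac{d}{du}\phi(\beta+u(x-\beta))\,du$. Differentiating $n$ times under the integral sign yields
$$\bigl[\ln h_{\beta,p,q}(x)\bigr]^{(n)}=\int_0^1 u^n\,\phi^{(n+1)}\bigl(\beta+u(x-\beta)\bigr)\,du,\qquad \phi^{(n+1)}(y)=\psi_{p,q}^{(n)}(y+s)-\psi_{p,q}^{(n)}(y+t).$$

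The crux is to prove that $\phi'$ is completely monotonic, i.e. $(-1)^n\phi^{(n+1)}(y)>0$ for all $n\geq0$. Substituting \eqref{psi_series2} (with the dummy variable renamed $\tau$) into each summand and combining the prefactors via $(-1)^n(-1)^{n+1}=-1$ gives
$$(-1)^n\phi^{(n+1)}(y)=\int_0^\infty\frac{\tau^n e^{-y\tau}}{1-e^{-\tau}}\bigl(1-e^{-(p+1)\tau}\bigr)\bigl(e^{-t\tau}-e^{-s\tau}\bigr)\,d\gamma_q(\tau).$$
Since $s>t$, the factor $e^{-t\tau}-e^{-s\tau}$ is strictly positive for $\tau>0$; every remaining factor of the integrand is non-negative and $\gamma_q$ is a positive measure, so the integral is positive. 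Plugging this back, for every $n\geq1$ and every $x\in(-\alpha,\infty)$,
$$(-1)^n\bigl[\ln h_{\beta,p,q}(x)\bigr]^{(n)}=\int_0^1 u^n\,(-1)^n\phi^{(n+1)}\bigl(\beta+u(x-\beta)\bigr)\,du>0,$$
which is precisely the assertion that $h_{\beta,p,q}$ is logarithmically completely monotonic.

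I expect the main obstacle to be not the sign computation but the verification that the scheme is legitimate on the stated domain. One must justify differentiating under the integral sign and, more delicately, check that the interior points keep the arguments of $\psi_{p,q}$ strictly positive. Because $s>t$ we have $\alpha=t$, and for $u\in[0,1]$ the point $\xi:=\beta+u(x-\beta)$ satisfies $\xi\geq\min\{\beta,x\}\geq-\alpha$, so $\xi+t\geq0$ and $\xi+s>0$. When $\beta>-\alpha$ these are strict and the integrand is regular, so the computation goes through verbatim. In the boundary case $\beta=-\alpha$, however, $u=0$ forces $\xi+t=0$, where $\psi_{p,q}^{(n)}$ diverges and the weight $u^n$ is in general not enough to remove the singularity; the cleanest remedy is to prove the inequalities for $\beta>-\alpha$ and recover $\beta=-\alpha$ by continuity. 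Apart from this technical point, the argument above delivers the result.
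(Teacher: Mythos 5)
Your proposal is correct and follows essentially the same route as the paper: both write $\ln h_{\beta,p,q}(x)$ as the divided difference $\int_0^1\varphi\bigl(\beta+u(x-\beta)\bigr)\,du$ with $\varphi(y)=\psi_{p,q}(y+s)-\psi_{p,q}(y+t)$, differentiate under the integral to get $[\ln h_{\beta,p,q}(x)]^{(k)}=\int_0^1u^k\varphi^{(k)}\bigl(\beta+u(x-\beta)\bigr)\,du$, and conclude from the complete monotonicity of $\varphi$. The only (cosmetic) difference is that you verify $(-1)^n\varphi^{(n+1)}>0$ directly from the kernel representation \eqref{psi_series2} using $e^{-t\tau}-e^{-s\tau}>0$, where the paper instead writes $\varphi(u)=\int_t^s\psi'_{p,q}(u+v)\,dv$ and invokes the complete monotonicity of $\psi'_{p,q}$; your added attention to the boundary case $\beta=-\alpha$ is a point the paper passes over silently.
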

\begin{proof}
For $x\not= \beta,$ taking  logarithm of the function $h_{\beta,p}(x)$ gives
\begin{align}
\ln h_{\beta,p,q}(x)&=\frac{1}{x-\beta}\Big[ \ln\frac{\Gamma _{p,q}(\beta+t)}{\Gamma_{p,q}(\beta+s)}-\ln\frac{\Gamma _{p,q}(x+s)}{\Gamma_{p,q}(x+t)}\Big]\notag\\
\notag &=\frac{\ln\Gamma _{p,q}(x+s)-\ln\Gamma_{p,q}(\beta+s)}{x-\beta}-\frac{\ln\Gamma _{p,q}(x+t)-\ln\Gamma_{p,q}(\beta+t)}{x-\beta}\notag \\
\notag &=\frac{1}{x-\beta}\int\limits_{\beta}^{x}\psi_{p,q}(u+s)du-\frac{1}{x-\beta}\int\limits_{\beta}^{x}\psi_{p,q}(u+t)du\notag \\
\notag &=\frac{1}{x-\beta}\int\limits_{\beta}^{x}[\psi_{p,q}(u+s)-\psi_{p,q}(u+t)]du\notag\\
\notag &=\frac{1}{x-\beta}\int\limits_{\beta}^{x}\int\limits_{t}^{s}\psi^{'}_{p,q}(u+v)dvdu\notag\\
\notag &=\frac{1}{x-\beta}\int\limits_{\beta}^{x}\varphi_{p,q,s,t}(u)du\notag\\
\notag &=\int\limits_{0}^{1}\varphi_{p,q,s,t}((x-\beta)u+\beta)du,
\end{align}
and by differentiating $\ln h_{\beta,p,q}(x)$ with respect to $x,$
\begin{equation}\label{ekj}[\ln h_{\beta,p,q}(x)]^{(k)}=\int\limits_{0}^{1}u^k\varphi^{(k)}_{p,q,s,t}((x-\beta)u+\beta)du.\end{equation}
If $x=\beta$ formula (\ref{ekj}) is valid.
Since  functions $\psi^{'}_{p,q}$ and $\varphi_{p,q,s,t}$ are  completely monotonic in $(0,\infty)$ and $(-t,\infty)$ respectively, then $(-1)^i[\varphi_{p,q,s,t}(x)]^{(i)}\geq 0$  holds for $n\in (-t,\infty)$ for any nonnegative integer $i.$
Thus $$(-1)^k[\ln h_{\beta,p,q}(x)]^{(k)}=\int\limits_{0}^{1}u^k(-1)^k\varphi^{(k)} _{p,q,s,t}((x-\beta)u+\beta)du\geq 0$$ in $(-t,\infty)$ for $k\in N.$ The proof is completed.
\end{proof}

\section{Application of $\Gamma_{p,q}(x)$ function}
In the following, we give the $\Gamma_{p,q}$ analogue of results from \cite{SHA}.
Since the proofs are almost similar, we omit them.

\begin{lemma}\label{lem22} Let $a,b,c,d,e$ be real numbers such that $a+bx>0,\ d+ex>0$ and
$a+bx \leq d+ex$. Then
\begin{equation}\label{eq2.2}
\psi_{p,q}(a+bx)-\psi_{p,q}(d+ex)\leq 0.
\end{equation}
\end{lemma}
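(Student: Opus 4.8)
The plan is to reduce the statement entirely to the monotonicity of $\psi_{p,q}$ already established in part b) of the preceding lemma, where it was shown that $0<u<v$ forces $\psi_{p,q}(u)-\psi_{p,q}(v)<0$; equivalently, $\psi_{p,q}$ is increasing on $(0,\infty)$. (Alternatively, the same monotonicity is visible directly from the series representation \eqref{psipq}: since $0<q<1$ gives $\log q<0$ and each summand $q^{x+k}/(1-q^{x+k})$ is decreasing in $x$, the whole expression is increasing in $x$.)

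First I would introduce the abbreviations $u=a+bx$ and $v=d+ex$. The three hypotheses $a+bx>0$, $d+ex>0$ and $a+bx\leq d+ex$ translate precisely into $u,v\in(0,\infty)$ together with $u\leq v$, so both arguments lie in the domain on which $\psi_{p,q}$ is defined and monotone. This substitution is the only real bookkeeping in the argument.

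Then, applying the monotonicity of $\psi_{p,q}$ to the ordering $u\leq v$ yields $\psi_{p,q}(u)\leq\psi_{p,q}(v)$, which after rearranging is exactly $\psi_{p,q}(a+bx)-\psi_{p,q}(d+ex)\leq 0$, the desired inequality \eqref{eq2.2}. The boundary case $a+bx=d+ex$ makes the two values coincide, giving difference zero, which is consistent with the non-strict sign $\leq$ in the statement.

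There is essentially no obstacle here: the entire content resides in the monotonicity lemma, and what remains is a one-line comparison. The only point deserving a word of care is verifying that the positivity hypotheses genuinely place both $u$ and $v$ inside $(0,\infty)$, which is precisely what $a+bx>0$ and $d+ex>0$ supply; the spurious parameter $c$ in the statement plays no role and can be disregarded.
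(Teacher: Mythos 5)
Your proof is correct and is exactly the intended argument: the paper itself omits this proof (deferring to the analogous lemma in \cite{SHA}, whose proof is the same one-line reduction to monotonicity of the psi-type function), and your reduction to part b) of the lemma in Section 2 --- setting $u=a+bx$, $v=d+ex$ with $0<u\leq v$ and invoking that $\psi_{p,q}$ is increasing on $(0,\infty)$ --- is precisely that argument. Your observation that the parameter $c$ is vestigial (inherited from the theorems where $c$ and $f$ appear as exponents) is also accurate.
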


\begin{lemma}\label{lem23} Let $a,b,c,d,e,f$ be real numbers such that $a+bx>0, d+ex>0,
a+bx\leq d+ex$ and $ef\geq bc>0$. If

(i) $\psi_{p,q}(a+bx)>0$ or

(ii) $\psi_{p,q}(d+ex)>0$

then
\begin{equation}\label{eq2.3}
bc\psi_{p,q}(a+bx)- ef\psi_{p,q}(d+ex)\leq 0.
\end{equation}
\end{lemma}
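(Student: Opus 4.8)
The plan is to reduce everything to the single inequality already furnished by Lemma \ref{lem22}, and then to track signs through a short case split driven by the hypothesis $ef\geq bc>0$. Write $A=\psi_{p,q}(a+bx)$ and $B=\psi_{p,q}(d+ex)$. Since $a+bx\leq d+ex$, Lemma \ref{lem22} gives at once
$$A\leq B.$$
From $ef\geq bc>0$ I record the three facts I will use repeatedly: $bc>0$, $ef>0$, and $bc\leq ef$. The whole argument then amounts to deciding, case by case, whether multiplying $A\leq B$ and $bc\leq ef$ together preserves the inequality direction.

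First I would treat hypothesis (i), where $A>0$. Here $A\leq B$ forces $B>0$ as well, so both factors are positive. Multiplying $bc\leq ef$ by the nonnegative number $A$ preserves its direction, giving $bc\,A\leq ef\,A$; multiplying $A\leq B$ by the positive number $ef$ gives $ef\,A\leq ef\,B$. Chaining these,
$$bc\,\psi_{p,q}(a+bx)=bc\,A\leq ef\,A\leq ef\,B=ef\,\psi_{p,q}(d+ex),$$
which is exactly \eqref{eq2.3}.

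Next I would treat hypothesis (ii), where $B>0$. If $A>0$ this is already covered by the computation above, so the only genuinely new situation is $A\leq 0$. In that case $bc\,A\leq 0$ because $bc>0$, while $ef\,B>0$ because $ef>0$ and $B>0$; hence $bc\,A\leq 0<ef\,B$, again giving \eqref{eq2.3}. Since (i) and (ii) together cover every admissible configuration, this finishes the proof.

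The step that requires the most care — and the reason the positivity hypothesis (i) or (ii) cannot be dropped — is the multiplication of $bc\leq ef$ by $A$, which only preserves the inequality when $A\geq 0$. If both $A$ and $B$ were negative the conclusion can genuinely fail: a larger factor $ef$ multiplying a negative $B$ can make $ef\,B$ strictly smaller than $bc\,A$. Thus the only real content of the argument is to verify that the two alternatives (i) and (ii) jointly exclude exactly that bad sign pattern, which the case split above does.
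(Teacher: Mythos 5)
Your proof is correct: the case split on the sign of $\psi_{p,q}(a+bx)$, combined with the monotonicity inequality from Lemma \ref{lem22}, is exactly the intended argument. The paper itself omits this proof, noting it is essentially the same as in \cite{SHA}, and your argument is precisely that standard one, so there is nothing to add.
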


\begin{lemma}\label{lem24} Let $a,b,c,d,e,f$ be real numbers such that $a+bx>0, d+ex>0,
a+bx\leq d+ex$ and $bc\geq ef>0$. If

(i) $\psi_{p,q}(d+ex)<0$ or

(ii) $\psi_{p,q}(a+bx)<0$

then
\begin{equation}\label{eq2.4}
bc\psi_{p,q}(a+bx)- ef\psi_{p,q}(d+ex)\leq 0.
\end{equation}
\end{lemma}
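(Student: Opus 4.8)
The plan is to reduce the inequality \eqref{eq2.4} to the monotonicity of $\psi_{p,q}$ together with an elementary sign analysis, in complete parallel to the (omitted) proof of Lemma \ref{lem23}. First I would abbreviate $A=\psi_{p,q}(a+bx)$ and $B=\psi_{p,q}(d+ex)$, so that the claim becomes simply $bc\,A\leq ef\,B$. The information available is threefold: from $a+bx\leq d+ex$ with both arguments positive, Lemma \ref{lem22} gives $A\leq B$ (this is exactly the increasing property of $\psi_{p,q}$ recorded in part b) of the representation lemma); from the hypothesis we have $bc\geq ef>0$; and from (i) or (ii) we know that at least one of $A,B$ is strictly negative. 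Everything then follows by tracking how an inequality between two reals of fixed sign behaves under multiplication by positive constants.

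Next I would dispatch the two hypotheses separately. Under (i), since $A\leq B<0$ we get $A<0$ as well; multiplying $A<0$ by the factors in the relation $bc\geq ef$ reverses the comparison to give $bc\,A\leq ef\,A$ (because $(bc-ef)A\leq 0$), and then $A\leq B$ with $ef>0$ yields $ef\,A\leq ef\,B$; chaining the two gives $bc\,A\leq ef\,B$, which is \eqref{eq2.4}. Under (ii) I would split on the unknown sign of $B$: if $B\geq 0$ then $bc\,A<0\leq ef\,B$ at once, using $bc>0$, $A<0$ and $ef>0$; and if $B<0$ one is back in precisely the configuration of case (i), so the same two-step chain $bc\,A\leq ef\,A\leq ef\,B$ applies. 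In every case the conclusion holds.

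The only point requiring genuine care is the subcase $B\geq 0$ inside (ii): the hypotheses of this lemma do not pin down the sign of $B$, so one must insert a sub-split rather than argue as in case (i). No analytic difficulty arises beyond correctly observing that multiplication by a single positive constant preserves an inequality while multiplication by an \emph{unequal} pair $bc\geq ef>0$ sharpens or reverses it according to the sign of the factor being multiplied; the completely monotonic machinery of the earlier sections enters only indirectly, through the monotonicity of $\psi_{p,q}$ supplied by Lemma \ref{lem22}.
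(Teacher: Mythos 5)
Your proof is correct, and it follows exactly the route the paper intends: the paper omits this proof as being ``almost similar'' to the corresponding lemma in \cite{SHA}, whose argument is precisely your combination of Lemma \ref{lem22} (monotonicity of $\psi_{p,q}$, giving $A\leq B$) with the elementary sign analysis under $bc\geq ef>0$. The sub-split on the sign of $B=\psi_{p,q}(d+ex)$ inside case (ii) is the right way to handle the fact that hypothesis (ii) alone does not determine that sign, and your chain $bc\,A\leq ef\,A\leq ef\,B$ in the negative case is exactly the standard argument.
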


\begin{theorem}\label{theorem2} Let $f_1$ be a function defined by
\begin{equation}\label{eq2.5}
f_1(x)=\frac{\Gamma_{p,q}(a+bx)^{c}}{\Gamma_{p,q} (d+ex)^{f}},\quad x\geq 0
\end{equation}
where \ $a,b,c,d,e,f$ are real numbers such that: $a+bx>0, d+ex>0,
a+bx\leq d+ex, ef\geq bc>0$. If $\psi_{p,q}(a+bx)>0$ or $\psi_{p,q}(d+ex)>0$
then the function $f_1$ is decreasing for $x \geq 0$ and for $x\in
[0,1]$ the following double inequality holds:
\begin{equation}\label{eq2.6}
\frac{\Gamma_{p,q}(a+b)^{c}}{\Gamma_{p,q}(d+e)^{f}} \leq
\frac{\Gamma_{p,q}(a+bx)^{c}}{\Gamma_{p,q} (d+ex)^{f}} \leq \frac{\Gamma_{p,q}
(a)^{c}}{\Gamma_{p,q} (d)^{f}}.
\end{equation}
\end{theorem}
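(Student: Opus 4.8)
The plan is to establish monotonicity of $f_1$ by showing that $f_1'(x)\le 0$ on $[0,\infty)$, and then to read off the double inequality \eqref{eq2.6} from monotonicity on the compact interval $[0,1]$. First I would take the logarithmic derivative of $f_1$: differentiating $\log f_1(x)=c\log\Gamma_{p,q}(a+bx)-f\log\Gamma_{p,q}(d+ex)$ and applying the chain rule together with the definition \eqref{psiPQ} of $\psi_{p,q}$ gives
\[
\frac{f_1'(x)}{f_1(x)}=bc\,\psi_{p,q}(a+bx)-ef\,\psi_{p,q}(d+ex).
\]
Since $f_1(x)>0$ throughout (it is a ratio of values of the positive function $\Gamma_{p,q}$), the sign of $f_1'(x)$ is exactly the sign of the right-hand side. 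The hypotheses of the theorem---namely $a+bx>0$, $d+ex>0$, $a+bx\le d+ex$, $ef\ge bc>0$, together with the positivity assumption $\psi_{p,q}(a+bx)>0$ or $\psi_{p,q}(d+ex)>0$---are precisely the hypotheses of Lemma \ref{lem23}. Invoking \eqref{eq2.3} yields $bc\,\psi_{p,q}(a+bx)-ef\,\psi_{p,q}(d+ex)\le 0$, hence $f_1'(x)\le 0$ and $f_1$ is decreasing for $x\ge 0$.

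For the double inequality, I would simply use that a decreasing function attains its maximum at the left endpoint and its minimum at the right endpoint of $[0,1]$. Thus for $x\in[0,1]$ we have $f_1(1)\le f_1(x)\le f_1(0)$, which upon substituting $x=0$ and $x=1$ into \eqref{eq2.5} gives exactly
\[
\frac{\Gamma_{p,q}(a+b)^{c}}{\Gamma_{p,q}(d+e)^{f}}\le
\frac{\Gamma_{p,q}(a+bx)^{c}}{\Gamma_{p,q}(d+ex)^{f}}\le
\frac{\Gamma_{p,q}(a)^{c}}{\Gamma_{p,q}(d)^{f}},
\]
which is \eqref{eq2.6}.

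The main obstacle is not any of these steps individually but verifying that Lemma \ref{lem23} applies uniformly in $x$ over the whole range $[0,\infty)$. The lemma's hypotheses are stated for fixed arguments, so I must check that the conditions $a+bx>0$, $d+ex>0$, and $a+bx\le d+ex$ persist for every $x\ge 0$, and that whichever positivity alternative (i) or (ii) is assumed continues to hold as $x$ varies; one should note that since $\psi_{p,q}$ is increasing by part (b) of the earlier lemma, positivity of $\psi_{p,q}$ at one argument propagates to larger arguments, which helps secure the alternative across the interval. Granting this uniform validity, the derivative is nonpositive at every point and the monotonicity argument goes through without further difficulty; this is why the authors remark that the proof is essentially the same as in \cite{SHA} and may be omitted.
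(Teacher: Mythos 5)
Your proposal is correct and is exactly the argument the paper intends: the paper omits the proof (deferring to \cite{SHA}), but Lemmas \ref{lem22}--\ref{lem24} are staged precisely so that one computes $f_1'(x)/f_1(x)=bc\,\psi_{p,q}(a+bx)-ef\,\psi_{p,q}(d+ex)$, applies Lemma \ref{lem23} to get $f_1'\le 0$, and then evaluates the decreasing function at the endpoints of $[0,1]$ to obtain \eqref{eq2.6}. Your additional remark about the pointwise applicability of the lemma (using that $\psi_{p,q}$ is increasing) is a sensible clarification but does not change the route.
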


In a similar way, using Lemma \ref{lem24}, it is easy to prove the
following Theorem.

\begin{theorem}\label{theorem3}
Let $f_1$ be a function defined by
\begin{equation}\label{eq2.7}
f_1(x)=\frac{\Gamma_{p,q}(a+bx)^{c}}{\Gamma_{p,q}(d+ex)^{f}}, \quad x\geq 0,
\end{equation}
where \ $a,b,c,d,e,f$ are real numbers such that: $a+bx>0, d+ex>0,
a+bx\leq d+ex, bc\geq ef>0$. If $\psi_{p,q}(d+ex)<0$ or $\psi_{p,q}(a+bx)<0$
then the function $f_1$ is decreasing for $x\geq 0$ and for $x\in
[0,1]$ the inequality \eqref{eq2.6} holds.
\end{theorem}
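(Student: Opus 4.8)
The plan is to mirror the proof of Theorem~\ref{theorem2} line for line, substituting Lemma~\ref{lem24} for Lemma~\ref{lem23}. First I would take logarithms and differentiate once. Writing $\log f_1(x)=c\log\Gamma_{p,q}(a+bx)-f\log\Gamma_{p,q}(d+ex)$ and applying the chain rule together with the definition \eqref{psiPQ} of $\psi_{p,q}$ yields
\[
\frac{f_1'(x)}{f_1(x)}=\frac{d}{dx}\log f_1(x)=bc\,\psi_{p,q}(a+bx)-ef\,\psi_{p,q}(d+ex).
\]

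Second, I would apply Lemma~\ref{lem24} pointwise. The standing hypotheses $a+bx>0$, $d+ex>0$, $a+bx\le d+ex$, $bc\ge ef>0$, together with the sign assumption $\psi_{p,q}(d+ex)<0$ or $\psi_{p,q}(a+bx)<0$, are exactly those of that lemma, whose conclusion \eqref{eq2.4} gives $bc\,\psi_{p,q}(a+bx)-ef\,\psi_{p,q}(d+ex)\le 0$. Hence the logarithmic derivative displayed above is nonpositive; since $f_1(x)>0$ for every admissible $x$, it follows that $f_1'(x)\le 0$, so $f_1$ is decreasing on $[0,\infty)$.

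Third, the double inequality \eqref{eq2.6} drops out from monotonicity alone. For $x\in[0,1]$ the decreasing property gives $f_1(1)\le f_1(x)\le f_1(0)$, and a direct evaluation at the endpoints, $f_1(0)=\Gamma_{p,q}(a)^{c}/\Gamma_{p,q}(d)^{f}$ and $f_1(1)=\Gamma_{p,q}(a+b)^{c}/\Gamma_{p,q}(d+e)^{f}$, turns this chain into precisely \eqref{eq2.6}.

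The one point deserving care---and the reason this is stated as a companion to Theorem~\ref{theorem2} rather than absorbed into it---is checking that the hypotheses of Lemma~\ref{lem24}, especially the chosen sign condition, remain valid as $x$ ranges over all of $[0,\infty)$, so that \eqref{eq2.4} may legitimately be invoked at every point. Here the increasing behaviour of $\psi_{p,q}$ recorded earlier is the natural tool for tracking the signs of $\psi_{p,q}(a+bx)$ and $\psi_{p,q}(d+ex)$ along the range. Once this uniform validity is secured, the computation is word-for-word that of Theorem~\ref{theorem2}, which is exactly why the authors note it can be proved in a similar way.
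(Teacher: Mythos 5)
Your proof is correct and is exactly the argument the paper intends: the paper omits the proof, stating only that it follows ``in a similar way, using Lemma~\ref{lem24}'' as Theorem~\ref{theorem2} (whose proof is in turn the $\Gamma_{p,q}$-analogue of \cite{SHA}), and your logarithmic differentiation, pointwise application of Lemma~\ref{lem24}, and endpoint evaluation on $[0,1]$ is precisely that argument.
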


{\bf Acknowledgements.}\, We would like to thank  Feng Qi and Armend Shabani for several corrections and suggestions.


\begin{thebibliography}{99}
\bibitem{AL0}
H. Alzer, {\em On some inequalities for the gamma and psi function},{\it Math. Comp. }{ 66} (1997) 373-389.

\bibitem{AP} T.M. Apostol, Introduction to Analytic Number Theory, Springer, 1976.

\bibitem{AS} M. Abramowitz and I.A. Stegun, Handbook of Mathematical Functions
with Formulas and Mathematical Tables, Dover, NewYork, 1965.

\bibitem{As}
R. Askey, {\em The $q$-gamma and $q$-beta functions},{\it Applicable Anal.} {8}(2) (1978/79) 125--141.

\bibitem{BO}
S. Bocher, {\it Harmonic Analysis and the theory of Probability}, Dover Books, 2005.

\bibitem{chen} Ch.-P. Chen and F. Qi, Logarithmically completely monotonic functions relating to the
gamma functions,{\it J. Math. Anal. Appl.} 321 (2006), 405-411.

\bibitem{Integrali} M.E.H. Ismail and M.E. Muldoon, Inequalites and monotonicity properties for gamma and q-gamma functions, {\it ISNM Aproximation and computation} 119 (1994), 309-323.

\bibitem{K}T. Kim, {\em On a $q$-analogue of the $p$-adic log gamma functions and
related integrals}, {\it J. Number Theory} { 76} (1999) 320-329.

\bibitem{K2}
T. Kim, {\em A note on the $q$-multiple zeta functions}, {\it Advan. Stud.
Contemp. Math. }{ 8} (2004) 111-113.

\bibitem{KR}
T. Kim and S.H. Rim, {\em A note on the $q$-integral and $q$-series},
{\it Advanced Stud. Contemp. Math. }{ 2} (2000) 37--45.

\bibitem{Valmir} V. Krasniqi and S. Guo, Logarithmically Completely monotonic functions involving generalized gamma and $q$-gamma functions,{\it J. Inequal. Spec. Funct.}, 1 (2010), 8 - 16.

\bibitem{Valmir1} V. Krasniqi and A. Shabani, Convexity properties and inequalities for a generalized gamma functions, {\it Appl. Math. E-Notes}, 10(2010), 27-35.

\bibitem{Valmir2}
V. Krasniqi, T. Mansour and A.Sh. Shabani, Some Monotonicity Properties and Inequalities for the Gamma and Rimann Zeta Functions, {\it Math. Commun,} Vol. 15, No. 2, pp. 365-376 (2010).

\bibitem{faton} V. Krasniqi and F. Merovci, Logarithmically completely monotonic functions involving the Generalized Gamma Function, {\it Le Matematiche }
Vol. LXV (2010)  Fasc. II, pp. 15-23.

\bibitem{Q}F. Qi, Three class of logarithmically completely monotonic functions involving the gamma
and psi functions,{\it  Integral Transform Spec . Funct. }18 (2007), 503-509 .

\bibitem{IM}
M.E.H. Ismail and M.E. Muldoon, {\em Inequalities and monotonicity
properties for gamma and $q$-gamma functions}, in: R.V.M. Zahar
(Ed.), {\it Approximation and Computation, International Series of
Numerical Mathematics,} vol. 119, Birkh\"auser, Boston, MA, 1994,
309--323.

\bibitem{MI} G.H. Hardy, J.E. Littlewood and G. P\'{o}lya, Inequalities, Cambridge University Press, 1988.

\bibitem{QI}
F. Qi and Ch.-P. Chen, {\em A complete monotonicity property of the Gamma function}, {\it J. Math. Anal. Appl.} { 296}
(2004), 603--607.

\bibitem{SA} J. Sandor, Selected Chapters of Geometry, Analysis and Number Theory, {\it RGMIA Monographs, Victoria University,} 2005.

\bibitem{SKS}
H.M. Srivastava, T. Kim and Y. Simsek, {\em $q$-Bernoulli numbers and
polynomials associated with multiple $q$-zeta functions and basic
L-series},{\it Russian J. Math. Phys. }{12} (2005) 241--268.

\bibitem{SHA} A.Sh. Shabani, Generalization of some inequalities for the Gamma function, {\it Mathematical Communications, }13(2008),271--275.
\end{thebibliography}
\end{document}